\documentclass[11pt,reqno,twoside]{article}
\newcommand{\rad}{\operatorname{rad}}

\usepackage[hang]{footmisc}
\usepackage{lipsum}

\usepackage{cmap} 

\usepackage[T1]{fontenc}
\usepackage[utf8]{inputenc}
\usepackage{graphicx}
\usepackage{array}
\usepackage{placeins}
\usepackage{enumerate}
\usepackage{booktabs}
\usepackage{algcompatible}
\usepackage[most]{tcolorbox}
\usepackage{mdframed}

\usepackage{verbatim}
\newcommand{\comments}[1]{}

\usepackage{soul}

\usepackage{setspace}

\let\counterwithin\relax  
\usepackage{lmodern} 

\usepackage{comment}

\usepackage{bm} 

\usepackage{bbold}

\usepackage{amsmath,amsbsy,amsgen,amscd,amsthm,amsfonts,amssymb}

\usepackage{float}       
\usepackage{algorithm}
\usepackage{algpseudocode}

\usepackage[centering,top=1.1in,bottom=1.4in,left=1in,right=1in]{geometry}

\usepackage[sf,bf,compact]{titlesec}
\usepackage[mathscr]{euscript}

\usepackage{enumitem}
\usepackage[dvipsnames]{xcolor}

\definecolor{dark-gray}{gray}{0.3}
\definecolor{dkgray}{rgb}{.4,.4,.4}
\definecolor{dkblue}{rgb}{0,0,.5}
\definecolor{medblue}{rgb}{0,0,.75}
\definecolor{rust}{rgb}{0.5,0.1,0.1}

\usepackage{url}
\usepackage[colorlinks=true]{hyperref}
\hypersetup{linkcolor=dkblue}    
\hypersetup{citecolor=rust}      
\hypersetup{urlcolor=rust}     

\usepackage[final]{microtype} 

\newtheoremstyle{myThm} 
    {\topsep}                    
    {\topsep}                    
    {\itshape}                   
    {}                           
    {\sffamily\bfseries}                   
    {.}                          
    {.5em}                       
    {}  

\newtheoremstyle{myRem} 
    {\topsep}                    
    {\topsep}                    
    {}                   
    {}                           
    {\sffamily}                   
    {.}                          
    {.5em}                       
    {}  

\newtheoremstyle{myDef} 
    {\topsep}                    
    {\topsep}                    
    {}                   
    {}                           
    {\sffamily\bfseries}                   
    {.}                          
    {.5em}                       
    {}  

\theoremstyle{myThm}
\newtheorem{theorem}{Theorem}[section]
\newtheorem{lemma}[theorem]{Lemma}
\newtheorem{proposition}[theorem]{Proposition}

\theoremstyle{myRem}
\newtheorem{remarkx}[theorem]{Remark}
 \newenvironment{remark}
  {\pushQED{\qed}\remarkx}
  {\popQED\endremarkx}
\usepackage{fancyhdr}
\let\originalleft\left
\let\originalright\right
\renewcommand{\left}{\mathopen{}\mathclose\bgroup\originalleft}
\renewcommand{\right}{\aftergroup\egroup\originalright}

\usepackage{mathtools}
\mathtoolsset{centercolon}  

\providecommand{\mathbbm}{\mathbb} 

\newcommand{\R}{\mathbbm{R}}
\newcommand{\E}{\mathbbm{E}}

\newcommand{\mcN}{\mathcal{N}}

\newcommand{\tr}{\mathrm{Tr}}   

\newcommand{\Var}{\mathsf{Var}}

\definecolor{mygreen}{rgb}{0.13,0.55,0.13}

\newcommand{\iid}{\stackrel{\text{i.i.d.}}{\sim}}

\usepackage[font = small, margin=30pt]{caption}

\usepackage[]{algorithm}
\usepackage{enumerate}

\usepackage{graphicx}

\usepackage{authblk}
\usepackage{chngcntr}
\usepackage{mathrsfs} 
\counterwithin{table}{section}
\counterwithin{algorithm}{section}

\usepackage[normalem]{ulem}

\usepackage{tikz}
\usetikzlibrary{shapes,arrows,arrows.meta, decorations.pathmorphing,backgrounds,positioning,fit,petri}
\usetikzlibrary{calc}
\usetikzlibrary{matrix}
\usetikzlibrary{backgrounds}
\usetikzlibrary{shapes.geometric}
\usetikzlibrary{patterns}
\usepackage{pgfplots}
\pgfplotsset{compat=newest}
\usepgfplotslibrary{groupplots}
\pgfdeclarelayer{background}
\pgfdeclarelayer{bBack}
\pgfsetlayers{bBack,background,main}
\usetikzlibrary{fit}

\usepackage{subcaption}

\title{Concentration Inequalities for Sample Cross-Covariances}

\author{Jiaheng Chen and Daniel Sanz-Alonso}

\date{University of Chicago}

\vspace{.5in}

\makeatletter\@addtoreset{section}{part}\makeatother%
\numberwithin{equation}{section}

\newcommand{\upperRomannumeral}[1]{\uppercase\expandafter{\romannumeral#1}}

\begin{document}
\maketitle 


\renewcommand{\thefootnote}{\fnsymbol{footnote}}


\vspace{-2em}
\abstract{This paper establishes sharp dimension-free concentration and expectation
bounds for the deviation of a sample cross-covariance matrix from its mean.
For sub-Gaussian random vectors, we prove a high-probability operator-norm
bound governed by the effective ranks of the two marginal covariance matrices.
In the Gaussian case, we prove a matching expectation lower bound,
allowing arbitrary correlation between the two random vectors.}

\bigskip

\section{Introduction}

Sample covariance matrices are among the most fundamental objects in probability and statistics. Concentration inequalities for sample
covariances play a central role in covariance estimation, principal component
analysis, regression, empirical process theory, and many related problems. 
A now-standard message is that the relevant dimension parameter is often not
the ambient dimension, but rather the effective rank of the covariance matrix $\Sigma,$ defined by
\[
r(\Sigma)=\frac{\tr(\Sigma)}{\|\Sigma\|},
\]
where \(\tr(\Sigma)\) denotes the trace and \(\|\Sigma\|\) denotes the operator norm. For instance, \cite{koltchinskii2017concentration} proved that, if \(X_1,\ldots,X_N\) are independent copies of a centered
sub-Gaussian random vector \(X\) with covariance matrix \(\Sigma\), then the
sample covariance satisfies, for every $u\ge 1$,  with probability at least $1-\exp(-u)$,
\[
\bigg\|
\frac1N\sum_{i=1}^N X_iX_i^\top-\Sigma
\bigg\|
\lesssim
\|\Sigma\|
\bigg(
\sqrt{\frac{r(\Sigma)+u}{N}}
+
\frac{r(\Sigma)+u}{N}
\bigg).
\]

This paper studies the corresponding problem for sample cross-covariances.  Sample cross-covariance matrices arise just as naturally in multivariate statistics and
data analysis. They are central in multivariate regression, canonical correlation analysis, instrumental variables, and
operator learning problems where one observes paired input-output data. 

Let \((X_i,Y_i)_{i=1}^N\) be independent copies of a centered random vector
\((X,Y)\in\mathbb R^{d_X+d_Y}\), with marginal covariance
matrices \(\Sigma_X\) and \(\Sigma_Y\). We are interested in sharp
high-probability bounds for
\[
\bigg\| \frac{1}{N}\sum_{i=1}^N X_iY_i^\top-\E XY^\top \bigg\|.
\] 
Although this deviation is formally similar
to the corresponding deviation for the
sample covariance, its sharp tail behavior is not yet fully understood.
Indeed, 
the off-diagonal structure and  possible dependence between  \(X\) and \(Y\) make the analysis of sample cross-covariances more delicate. Our goal is to put sample cross-covariances on the same theoretical footing as sample covariances.

\section{Main Result}
The Orlicz \(\psi_2\) norm and the
\(L_p\) norm of a real-valued random variable \(Z\) are defined by
\[
\|Z\|_{\psi_2}
:=
\inf\left\{
t>0:\mathbb{E}\exp(Z^2/t^2)\le 2
\right\},
\qquad
\|Z\|_{L_p} := \big(\mathbb{E}|Z|^p\big)^{1/p}.
\]
We say that a random vector \(X \in \R^d\)  is sub-Gaussian if there exists \(K>0\) such
that
\[
\|\langle X,v\rangle\|_{\psi_2}
\le K \|\langle X,v\rangle\|_{L_2},
\qquad
\text{for any } v\in \mathbb{S}^{d-1},
\]
where $\mathbb{S}^{d-1}$ denotes the unit sphere in $\R^d.$ 
The smallest such constant $K>0$ is called the sub-Gaussian constant of $X$. The sub-Gaussian constant of a centered Gaussian random variable is $K= \sqrt{8/3}$. For positive sequences $\{a_n\}, \{b_n\}$, we write $a_n \lesssim b_n$ to denote that, for some constant $c >0$, $a_n \le c b_n.$ If both $a_n \lesssim b_n$ and $b_n \lesssim a_n$ hold, we write $a_n \asymp b_n.$ Throughout the paper, all covariance matrices are allowed to be singular, but are assumed to be nonzero.

Our main result establishes operator-norm concentration bounds for sample cross-covariances.

\begin{theorem}\label{thm:main1}
Let \((X_i,Y_i)_{i=1}^N\) be i.i.d. copies of a centered random vector
\((X,Y)\in\mathbb R^{d_X+d_Y}\). Assume that \(X\) and \(Y\) are
sub-Gaussian random vectors with covariance matrices \(\Sigma_X\) and
\(\Sigma_Y\), and sub-Gaussian constants \(K_X,K_Y\). Set
\[
r_{X}:=\frac{\tr(\Sigma_X)}{\|\Sigma_X\|},
\qquad
r_{Y}:=\frac{\tr(\Sigma_Y)}{\|\Sigma_Y\|}.
\]
For every \(u\ge 1\), it holds with probability at least \(1-\exp(-u)\) that
\[
\bigg\|\frac{1}{N}\sum_{i=1}^{N} X_i Y_i^{\top}-\E X Y^{\top}\bigg\| \lesssim K_{X}K_{Y}(\|\Sigma_{X}\|\|\Sigma_Y\|)^{1/2} \bigg(\sqrt{\frac{r_{X}+r_{Y}+u}{N}}+ \frac{\sqrt{(r_{X}+u)(r_{Y}+u)}}{N}\bigg).
\]
As a corollary,
 \begin{align*}
\E \bigg\|\frac{1}{N}\sum_{i=1}^{N} X_i Y^{\top}_i-\E X Y^{\top}\bigg\| \lesssim K_{X}K_{Y} (\|\Sigma_{X}\|\|\Sigma_Y\|)^{1/2} \bigg(\sqrt{\frac{r_{X}+r_{Y}}{N}}+ \frac{\sqrt{r_{X}r_{Y}}}{N}\bigg).
 \end{align*}
Moreover, if $(X,Y)$ is jointly Gaussian, then
\begin{align*}
\E \bigg\|\frac{1}{N}\sum_{i=1}^{N} X_i Y^{\top}_i-\E XY^{\top}\bigg\| \asymp (\|\Sigma_{X}\|\|\Sigma_Y\|)^{1/2} \bigg(\sqrt{\frac{r_{X}+r_{Y}}{N}}+ \frac{\sqrt{r_{X}r_{Y}}}{N}\bigg).
\end{align*} 
\end{theorem}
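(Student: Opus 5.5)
We start by reducing to a bilinear empirical process. We write the deviation as
\[
\sup_{v\in\mathbb S^{d_X-1},\,w\in\mathbb S^{d_Y-1}}\frac1N\sum_{i=1}^N\big(\langle X_i,v\rangle\langle Y_i,w\rangle-\E\langle X,v\rangle\langle Y,w\rangle\big),
\]
and normalize so that $\|\Sigma_X\|=\|\Sigma_Y\|=1$. We then use the polarization identity
\[
\langle X,v\rangle\langle Y,w\rangle=\tfrac14\big(\langle Z,(v,w)\rangle^2-\langle Z,(v,-w)\rangle^2\big),\qquad Z=(X,Y).
\]
After rescaling $X\mapsto tX$, $Y\mapsto Y/t$, this reduces the problem to the Koltchinskii--Lounici bound for $Z_t=(tX,Y/t)$. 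That bound gives the correct rate only with $r(\Sigma_{Z_t})$ replaced by roughly $t^2r_X+r_Y/t^2$. The linear term would be $\sqrt{(t^2r_X+t^{-2}r_Y+u)/N}$ and the quadratic term $(t^2r_X+t^{-2}r_Y+u)/N$. Choosing $t^2=\sqrt{r_Y/r_X}$ makes the quadratic term $\sqrt{r_Xr_Y}/N$, but it spoils the linear term. A single rescaling therefore cannot produce both terms at once.

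For this reason I would instead run a direct generic-chaining argument for the product process, using Talagrand's $\gamma$ functionals. The increments $\langle X,v\rangle\langle Y,w\rangle-\langle X,v'\rangle\langle Y,w'\rangle$ are $\psi_1$ with norm at most $K_XK_Y(\|v-v'\|_{\Sigma_X}+\|w-w'\|_{\Sigma_Y})$. Bernstein's inequality then yields a mixed-tail process, with sub-Gaussian distance $d_2/\sqrt N$ and sub-exponential distance $d_1/N$. By the Dirichlet/mixed-tail chaining theorem, the supremum is bounded by
\[
\frac{\gamma_2(T,d_2)}{\sqrt N}+\frac{\gamma_1(T,d_1)}{N}+\sqrt{u/N}+u/N .
\]
Majorizing measures with Gaussian comparison give $\gamma_2\lesssim\sqrt{\tr\Sigma_X}+\sqrt{\tr\Sigma_Y}$. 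The difficulty is that $\gamma_1$ on the product is about $r_X+r_Y$, not $\sqrt{r_Xr_Y}$, so naive chaining is too lossy. The main obstacle is thus the quadratic term, and I would handle it with the decoupled chaining approach of Mendelson/Dirksen for products of processes.

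In that approach the product is written as a sum over chain links. A typical link is
\[
\frac1N\sum_i\langle X_i,\pi_{s}v-\pi_{s-1}v\rangle\langle Y_i,w\rangle .
\]
Using Cauchy--Schwarz in $i$, its size is controlled by
\[
\sup_w\Big(\frac1N\sum_i\langle Y_i,w\rangle^2\Big)^{1/2}\,\sup\Big(\frac1N\sum_i\langle X_i,\Delta_s v\rangle^2\Big)^{1/2},
\]
but only at levels $s$ with $2^s\gtrsim N$. At levels $2^s\lesssim N$ we use Bernstein's inequality with its sub-Gaussian regime. Dirksen's bound for $\sup_{v}\big(\sum_{i}\langle X_i,v\rangle^2\big)^{1/2}$ gives the following. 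For the $Y$-side the factor is $\lesssim K_Y(\sqrt N+\sqrt{r_Y+u})$, and similarly for the $X$-side; the $X$ tail increments contribute $\gamma_2$-type sums of order $\sqrt{r_X+u}$. The cross term is then
\[
\frac1N\sqrt{r_X+u}\,\sqrt{r_Y+u},
\]
which is the desired quadratic term, and the remaining pieces are $\sqrt{(r_X+r_Y+u)/N}$. Equivalently, one may condition on $(Y_i)$: given $Y$, the matrix $\frac1N\sum X_iY_i^\top$ is a sub-Gaussian chaos in the $X_i$, though these are dependent on $Y_i$, so I prefer the chaining route. The expectation bound follows by integrating the tail over $u$.

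For the Gaussian lower bound, we write $Y=AX'+BG$ in a joint Gaussian representation. It suffices to test two directions. First, fixing a top eigenvector $w$ of $\Sigma_Y$ gives a vector-valued average in $\mathbb R^{d_X}$. Its expected norm is $\gtrsim\sqrt{\tr\Sigma_X\|\Sigma_Y\|/N}$, by a second-moment computation together with Gaussian hypercontractivity (Paley--Zygmund); symmetrically we get the $r_Y$ term. Second, for the term $\sqrt{\tr\Sigma_X\tr\Sigma_Y}/N$, we use
\[
\|M\|\ge\|M^\top w\|
\]
with $w$ chosen data-dependently, say $w=X_1/|X_1|$. This yields $\frac1N|X_1|\,|Y_1|$ minus the contributions of the remaining terms, which are of smaller order when $r_Xr_Y\gtrsim N(r_X+r_Y)$; in the other regime the first term already dominates. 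Correlation between $X$ and $Y$ enters only through means, and these are removed by the centering, so arbitrary correlation is allowed.
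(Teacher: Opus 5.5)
Your outline can be made to work. However, you abandon the paper's route for a mistaken reason, replace it with much heavier machinery, and leave one step of the lower bound unjustified.

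\textbf{Upper bound.} The claim that a single rescaling cannot give both terms comes from an arithmetic slip: you dropped the prefactor $\|\Sigma_{Z_t}\|$ in the Koltchinskii--Lounici bound. Normalize $\|\Sigma_X\|=\|\Sigma_Y\|=1$ and put $s=t^2$. Then $\|\Sigma_{Z_t}\|\le s+s^{-1}$ and $\tr(\Sigma_{Z_t})=sr_X+s^{-1}r_Y$. That bound is increasing in $\|\Sigma\|$ at fixed trace, so it is at most
\[
\sqrt{\frac{(s+s^{-1})(sr_X+s^{-1}r_Y)+(s+s^{-1})^2u}{N}}+\frac{s(r_X+u)+s^{-1}(r_Y+u)}{N}.
\]
Take $s=\sqrt{(r_Y+u)/(r_X+u)}$. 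The second term becomes $2\sqrt{(r_X+u)(r_Y+u)}/N$. Since $s^2r_X,\,s^2u\le r_Y+u$ and $s^{-2}r_Y,\,s^{-2}u\le r_X+u$, the first term is $\lesssim\sqrt{(r_X+r_Y+u)/N}$. Even in your own accounting nothing is spoiled, because $2\sqrt{r_Xr_Y}\le r_X+r_Y$.

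This one-scale argument is exactly the paper's proof, with $\lambda=t^{-2}$ in $\mathcal F_\lambda^\pm$. The real obstacle to citing Koltchinskii--Lounici as a black box is different. When $X$ and $Y$ are dependent, $(tX,Y/t)$ need not satisfy the $L_2$--$\psi_2$ sub-Gaussian condition: $t\langle X,v\rangle+t^{-1}\langle Y,w\rangle$ can nearly cancel in $L_2$ without its $\psi_2$ norm being small. For jointly Gaussian data your black-box version works verbatim. In general, the paper avoids the issue by applying Lemma~\ref{lem:quad_emp} directly, bounding $\Gamma(\mathcal F_\lambda^\pm)$ and $\Delta(\mathcal F_\lambda^\pm)$ via the triangle inequality using only the marginal constants, and then optimizing $P/\lambda+Q\lambda+R$ over $\lambda$.

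Your substitute, two-regime chaining of the bilinear process with Cauchy--Schwarz at levels $2^s\gtrsim N$, is a legitimate alternative; done carefully it is essentially Mendelson's product-process bound. But you would be reproving a harder theorem, and your sketch skips its delicate part. If you run Bernstein with parameter $2^s+u$ at every level, you pick up $\sqrt u\sum_{s\ge s_0}\|\pi_sv-\pi_{s-1}v\|$, which is not bounded by $\sqrt{r_X+u}$ in general. This is why one starts the chain at $2^\ell\sim u$, as in Dirksen's tail bounds.

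\textbf{Lower bound.} Here your route differs from the paper's, and its $N^{-1/2}$ part is simpler. The paper writes $Y=LX+Z$, treats $A_N$ by Lemma~\ref{lemma:cov_times_L_lower}, cites the independent case for $B_N$, and recombines via $P+Q=\Sigma_Y$. Your second-moment computation handles arbitrary correlation in one stroke. By Isserlis, $\E\|Mw\|^2=\frac1N\bigl(\tr(\Sigma_X)\,w^\top\Sigma_Yw+\|\Sigma_{XY}w\|^2\bigr)$, and hypercontractivity of the degree-four polynomial $\|Mw\|^2$ turns this into a first-moment bound.

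The $N^{-1}$ part, however, has an unjustified step. The test vector $w=X_1/\|X_1\|$ with the regime split is the paper's Jensen/convex-combination device. Applied to $(X,Y)$ directly, it requires $\E\|X\|\,\|Y\|\gtrsim\sqrt{\tr(\Sigma_X)\tr(\Sigma_Y)}$ for correlated $X,Y$. Your remark that correlation ``enters only through means'' does not give this, since a priori $\|X\|$ and $\|Y\|$ could be negatively associated. The paper obtains the analogous bound for $X$ and $LX$ from the Gaussian correlation inequality. In your setting, applying that inequality to the symmetric convex sets $\{\|x\|\le a\}$ and $\{\|y\|\le b\}$ in $\R^{d_X+d_Y}$ gives $\E\|X\|\,\|Y\|\ge\E\|X\|\,\E\|Y\|$. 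Alternatively, combine $\E\|X\|^2\|Y\|^2=\tr(\Sigma_X)\tr(\Sigma_Y)+2\|\Sigma_{XY}\|_F^2$ with hypercontractivity, as in your first step. Either fix closes the argument.
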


We make a few remarks on Theorem~\ref{thm:main1}. 

\begin{remark}\leavevmode
\begin{enumerate}
    \item The upper bound in Theorem~\ref{thm:main1} only requires \(X\) and \(Y\) to be
individually sub-Gaussian, while allowing arbitrary dependence between them.
The matching lower bound is proved in the jointly Gaussian case, again allowing
arbitrary dependence between \(X\) and \(Y\). In particular, when \(X=Y\), Theorem \ref{thm:main1} recovers the dimension-free concentration inequalities for sample
covariances established in
\cite[Theorems 4 and 9]{koltchinskii2017concentration}.

    \item In the case where \(X\) and \(Y\) are independent,
    \cite[Lemma A.3]{ghattas2022non} obtained a high-probability upper bound by
    applying the covariance bound of \cite{koltchinskii2017concentration} to
    the concatenated vector \((X,Y)\). This argument, however, does not capture
    the product structure of the cross-covariance fluctuation, such as the
    factors
    \(\sqrt{\|\Sigma_X\|\|\Sigma_Y\|}\) and \(\sqrt{r_X r_Y}\). In the more general setting where \(X\) and \(Y\) may be arbitrarily
dependent, \cite[Theorem 2.1]{chen2026sharp2} obtained an in-expectation
upper bound for sub-Gaussian random vectors as a corollary of more general
concentration results for asymmetric tensors, without capturing the sharp tail behavior.    The new contributions of
Theorem~\ref{thm:main1} are twofold. First, it gives a sharp high-probability
upper bound for sample cross-covariances; the proof is short and transparent, combining a standard quadratic
empirical process inequality with a scaled polarization argument and an
optimization over the scaling parameter. Second, in the jointly Gaussian
case, it establishes a matching lower bound allowing arbitrary correlation
between \(X\) and \(Y\).

    \item The sharp upper and lower bounds in Theorem~\ref{thm:main1} show
    that, in the Gaussian case, the size of the fluctuation is essentially
    insensitive to the strength of the dependence between \(X\) and \(Y\).
    To see the intuition, write
    \[
    \bigg\|\frac{1}{N}\sum_{i=1}^{N} X_i Y_i^{\top}-\E X Y^{\top}\bigg\|
    =
    \sup_{\|v\|_2 \le 1,\, \|h\|_2\le 1}
    \left|
    \frac{1}{N}\sum_{i=1}^N
    \langle X_i,v\rangle \langle Y_i,h\rangle
    -
    \E \langle X,v\rangle \langle Y,h\rangle
    \right|.
    \]
   For fixed directions \(v\) and \(h\),
    \((\langle X,v\rangle,\langle Y,h\rangle)\) is jointly Gaussian, and
    Isserlis' formula \cite{isserlis1918formula} gives
    \[
    \Var\bigl(\langle X,v\rangle\langle Y,h\rangle\bigr)
    =
    \E\langle X,v\rangle^2\,\E\langle Y,h\rangle^2
    +
    \bigl(\E\langle X,v\rangle\langle Y,h\rangle\bigr)^2 .
    \]
    Therefore, by Cauchy--Schwarz,
    \[
    \E\langle X,v\rangle^2\,\E\langle Y,h\rangle^2
    \le
    \Var\bigl(\langle X,v\rangle\langle Y,h\rangle\bigr)
    \le
    2\E\langle X,v\rangle^2\,\E\langle Y,h\rangle^2 .
    \]
    Thus the variance of the product \(\langle X,v\rangle\langle Y,h\rangle\)
    is comparable to
    \(\E\langle X,v\rangle^2\,\E\langle Y,h\rangle^2\), with universal
    constants, independently of the covariance
    \(\E\langle X,v\rangle\langle Y,h\rangle\). This explains why, in the
    Gaussian case, the fluctuation scale is governed by the marginal
    covariances of \(X\) and \(Y\), rather than by the size of the
    cross-covariance. Theorem~\ref{thm:main1} makes this intuition rigorous in
    operator norm, where one takes the supremum over all unit vectors \(v\)
    and \(h\).
\end{enumerate}
\end{remark}

\begin{proof}[Proof of Theorem \ref{thm:main1}]
    The high-probability upper bound in the sub-Gaussian case follows from Theorem \ref{thm:main2}.   Write
\[
X=\Sigma_X^{1/2}Z,\qquad Y=\Sigma_Y^{1/2}W,
\]
where \(Z\) and \(W\) are isotropic sub-Gaussian random vectors. We apply Theorem \ref{thm:main2} with \[
T=\Sigma_{X}^{1/2}\mathbb{S}^{d_{X}-1}, \qquad S=\Sigma_{Y}^{1/2}\mathbb{S}^{d_{Y}-1}.
\]
Using that
    \[
    \rad(T)=\|\Sigma_X\|^{1/2}, \qquad \gamma(T)\le \tr(\Sigma_X)^{1/2}=\|\Sigma_X\|^{1/2}r_X^{1/2},
    \]
    and
      \[
    \rad(S)=\|\Sigma_Y\|^{1/2}, \qquad \gamma(S)\le \tr(\Sigma_Y)^{1/2}=\|\Sigma_Y\|^{1/2}r_Y^{1/2},
    \]
    gives the desired bound.
    The in-expectation upper bound follows by integrating the tail bound. The matching lower bound in the jointly Gaussian case follows from Proposition \ref{prop:gaussian_lower_bound}.
\end{proof}

\section{Upper Bound}

We say that a random vector in \(\mathbb{R}^d\) is
isotropic if its covariance matrix is the identity matrix \(I_d\). Next, let
\(g \sim \mcN(0,I_d)\) be the standard multivariate Gaussian random vector. For a set \(T\subset \R^d\), we define its radius, Gaussian complexity,
and stable dimension by
\begin{align}\label{eq:geometric_quantity}
\rad(T):=\sup_{v\in T}\|v\|_2,\qquad
\gamma(T):=\E \sup_{v\in T} |\langle g,v \rangle|,\qquad
d(T):=\left(\frac{\gamma(T)}{\rad(T)}\right)^2.
\end{align}
We refer to
\cite[Chapter 7]{vershynin2018high} for background on these geometric
quantities.

\begin{theorem}\label{thm:main2}
Let \((Z_i,W_i)_{i=1}^N\) be i.i.d. copies of a centered random
vector \((Z,W)\in \mathbb R^{d_Z+d_W}\). Assume that \(Z\) and \(W\) are isotropic
sub-Gaussian random vectors with sub-Gaussian constants \(K_Z\) and \(K_W\).  Let
\(T\subset \mathbb R^{d_Z}\) and \(S\subset \mathbb R^{d_W}\) be bounded sets. For every \(u\ge 1\), it holds with probability at least  \(1-\exp(-u)\)  that
\begin{align}
&\sup_{v\in T,\, h\in S}
\left|
\frac{1}{N}\sum_{i=1}^N
\langle Z_i,v\rangle \langle W_i,h\rangle
-
\E \langle Z,v\rangle \langle W,h\rangle
\right| \nonumber\\
&\lesssim K_Z K_W \rad(T)\rad(S)\left(
\sqrt{\frac{d(T)+d(S)+u}{N}}+\frac{\sqrt{(d(T)+u)(d(S)+u)}}{N}\right).
\end{align}
\end{theorem}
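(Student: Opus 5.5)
The plan is to reduce the bilinear process to a quadratic one by a scaled polarization, apply the standard quadratic empirical process inequality, and then optimize over the scaling parameter.

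\textbf{Step 1: scaled polarization.} For a parameter $\lambda>0$, put $a=\lambda^{1/2}\langle Z,v\rangle$ and $b=\lambda^{-1/2}\langle W,h\rangle$. The identity $ab=\tfrac14\big((a+b)^2-(a-b)^2\big)$ then gives
\[
\langle Z,v\rangle\langle W,h\rangle=\tfrac14\Big(\big(\lambda^{1/2}\langle Z,v\rangle+\lambda^{-1/2}\langle W,h\rangle\big)^2-\big(\lambda^{1/2}\langle Z,v\rangle-\lambda^{-1/2}\langle W,h\rangle\big)^2\Big).
\]
Write $U=(Z,W)\in\mathbb R^{d_Z+d_W}$ and consider the index set $Q_\lambda^{\pm}=\{(\lambda^{1/2}v,\pm\lambda^{-1/2}h):v\in T,h\in S\}$. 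Each summand is then a centered quadratic process $\frac1N\sum_i\langle U_i,q\rangle^2-\E\langle U,q\rangle^2$ indexed by $q\in Q^\pm_\lambda$, and the target supremum is at most half the maximum of the two quadratic suprema.

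\textbf{Step 2: the quadratic inequality.} $U$ need not be isotropic, since $Z$ and $W$ may be dependent. It is, however, sub-Gaussian in the $\psi_2$ sense along the relevant directions:
\[
\|\langle U,q\rangle\|_{\psi_2}\le K_Z\lambda^{1/2}\|v\|+K_W\lambda^{-1/2}\|h\|\lesssim K\,\|q\|_2,
\]
with $K=\max(K_Z,K_W)$. This uses the isotropy of each marginal, so that $\|\langle Z,v\rangle\|_{L_2}=\|v\|$. I would then invoke the standard quadratic empirical process bound (Mendelson; Dirksen; Koltchinskii--Lounici style) in its Talagrand $\gamma_2$ form with the $\psi_2$ metric. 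The increments satisfy $\|\langle U,q-q'\rangle\|_{\psi_2}\lesssim K\|q-q'\|$ after splitting into the $Z$ and $W$ blocks, so majorizing measures give $\gamma_2(Q,\psi_2)\lesssim K\gamma(Q)$. The resulting bound, with probability $1-e^{-u}$, is
\[
\sup_{q}\Big|\tfrac1N\textstyle\sum\langle U_i,q\rangle^2-\E\Big|\lesssim K^2\Big(\rad(Q)\frac{\gamma(Q)+\sqrt u\,\rad(Q)}{\sqrt N}+\frac{\gamma(Q)^2+u\,\rad(Q)^2}{N}\Big).
\]
To get $K_ZK_W$ instead of $K^2$, I would first rescale $Z\to Z/K_Z$ and $W\to W/K_W$, which turns the $\psi_2$ bounds into bounds with constant $1$. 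This adds a factor $K_ZK_W$ in front and absorbs the scale into $\lambda$; I still need to check that isotropy is not needed elsewhere in this step.

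\textbf{Step 3: the geometry of $Q_\lambda$.} The radius and Gaussian complexity satisfy
\[
\rad(Q_\lambda)\le \lambda^{1/2}\rad(T)+\lambda^{-1/2}\rad(S),\qquad \gamma(Q_\lambda)\le\lambda^{1/2}\gamma(T)+\lambda^{-1/2}\gamma(S).
\]
Write $\alpha=\gamma(T)+\sqrt u\,\rad(T)$ and $\beta=\gamma(S)+\sqrt u\,\rad(S)$. Expanding the products, the bound becomes
\[
\frac{1}{\sqrt N}\big(\lambda\,\rad(T)\alpha+\lambda^{-1}\rad(S)\beta+\rad(T)\beta+\rad(S)\alpha\big)+\frac1N\big(\lambda\alpha^2+\lambda^{-1}\beta^2\big).
\]
The cross terms $\rad(T)\beta+\rad(S)\alpha$ are already of the desired form $\rad(T)\rad(S)\big(\sqrt{d(T)+u}+\sqrt{d(S)+u}\big)$. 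The $\lambda$-dependent terms have the shape $\lambda A+\lambda^{-1}B$ with
\[
A=\rad(T)\alpha/\sqrt N+\alpha^2/N,\qquad B=\rad(S)\beta/\sqrt N+\beta^2/N,
\]
so choosing $\lambda=\sqrt{B/A}$ gives $2\sqrt{AB}$. Using $\sqrt{(x+y)(z+w)}\le \sqrt{xz}+\sqrt{xw}+\sqrt{yz}+\sqrt{yw}$, the factor $\sqrt{AB}$ splits into four terms:
\[
\sqrt{AB}\le \frac{\sqrt{\rad(T)\rad(S)\alpha\beta}}{\sqrt N}+\frac{\alpha\beta}{N}+\frac{\sqrt{\rad(T)\alpha}\,\beta}{N^{3/4}}+\frac{\alpha\sqrt{\rad(S)\beta}}{N^{3/4}}.
\]
The first is bounded by AM--GM by the $1/\sqrt N$ term, and the second equals the desired $\rad(T)\rad(S)\sqrt{(d(T)+u)(d(S)+u)}/N$ term. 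Each $N^{-3/4}$ mixed term is a geometric mean of a $1/\sqrt N$ term and a $1/N$ term, so AM--GM absorbs it into their sum.

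\textbf{Main obstacle.} I expect the hardest part to be Step 2. One must verify that the quadratic process bound applies to the non-isotropic concatenated vector $U$ with the correct $\psi_2$-metric, and obtain the product $K_ZK_W$ rather than $\max(K_Z,K_W)^2$. For the latter I would rely on the rescaling trick, which is harmless because the optimization over $\lambda$ is scale-invariant.
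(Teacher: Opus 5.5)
Your proposal is correct and follows essentially the same route as the paper: scaled polarization, the $\psi_2$/$\gamma_2$ quadratic empirical process bound (the paper's Lemma~3.2) applied to the two polarized classes, and optimization over $\lambda$. The differences are only bookkeeping:
\begin{itemize}
\item you parametrize with $\lambda^{\pm 1/2}$;
\item you bound $\gamma(Q_\lambda)$ by subadditivity rather than merging admissible chains for $T$ and $S$;
\item you rescale to $Z/K_Z$, $W/K_W$ to get $K_ZK_W$, where the paper tracks $A,B,C,D$ directly;
\item you absorb the mixed $N^{-3/4}$ terms by AM--GM, where the paper uses $\sqrt{(a+a^2)(b+b^2)}\lesssim a+b+ab$.
\end{itemize}
Your worry about the non-isotropic vector $(Z,W)$ is moot, since that bound involves only $\psi_2$ norms. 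You should still say explicitly that the union bound over the $\pm$ classes is absorbed into the constants.
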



Our proof of Theorem~\ref{thm:main2} is based on the following standard quadratic
empirical process bound (see e.g., \cite[Corollary 5.7]{dirksen2015tail}, \cite[Theorem 1]{bednorz2014}, \cite[Theorem 8]{koltchinskii2017concentration}), combined with a scaled polarization argument and an
optimization over the scaling parameter. The scaling parameter in the polarization identity is used to preserve the
separate complexities of the two index sets \(T\) and \(S\), as well as the
dependence on the confidence level \(u\). This is essential for obtaining the
product term in the final bound.

Recall that, for a metric space \((T,d)\), Talagrand's \(\gamma_2\)-functional \cite{talagrand2022upper} is defined by
\[
\gamma_2(T,d)
:=
\inf_{\{T_n\}}
\sup_{t\in T}
\sum_{n\ge 0} 2^{n/2} d(t,T_n),
\]
where \(d(t,T_n):=\inf_{s\in T_n}d(t,s)\), and the infimum is taken over all
admissible sequences \(\{T_n\}_{n\ge 0}\) of subsets of \(T\), that is, sequences
satisfying $|T_0|=1$ and $|T_n|\le 2^{2^n}$ for $n\ge 1$.

\begin{lemma}\label{lem:quad_emp}
Let $\xi, \xi_1, \ldots, \xi_N \iid \mu$ be a sequence of random variables on a probability space $(\Omega, \mu)$, and let \(\mathcal F\) be a class of functions on $(\Omega, \mu)$. Set
\[
\Delta(\mathcal F):=\sup_{f\in\mathcal F}\|f\|_{\psi_2},
\qquad
\Gamma(\mathcal F):=\gamma_2(\mathcal F,\|\cdot\|_{\psi_2}).
\]
  Then, for every
\(u\ge1\), with probability at least \(1-\exp(-u)\),
\[
\sup_{f\in\mathcal F}
\left|
\frac1N\sum_{i=1}^N f^2(\xi_i)-\E f^2(\xi)
\right|
\lesssim
\frac{\Gamma(\mathcal F)\Delta(\mathcal F)}{\sqrt N}
+
\frac{\Gamma^2(\mathcal F)}{N}
+
\Delta^2(\mathcal F)
\left(\sqrt{\frac uN}+\frac uN\right).
\]
\end{lemma}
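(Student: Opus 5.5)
We prove Lemma~\ref{lem:quad_emp} by a generic chaining argument in the style of Mendelson--Pajor--Tomczak-Jaegermann and Dirksen, splitting the chain into a ``Bernstein regime'' and an ``empirical $L_2$ regime''. Fix an almost optimal admissible sequence $\{\mathcal F_n\}$ for $\Gamma(\mathcal F)$, with nearest-point maps $\pi_n f\in\mathcal F_n$, and write $d_n(f):=\|f-\pi_n f\|_{\psi_2}$, so that $\sup_f\sum_n 2^{n/2}d_n(f)\lesssim\Gamma$. Fix two levels:
\begin{itemize}
\item $\ell$, the smallest integer with $2^\ell\ge u$, which absorbs the confidence parameter;
\item $n_0$, the largest integer with $2^{n_0}\le N$, which separates the two regimes.
\end{itemize}
If $\ell> n_0$, i.e.\ $u\gtrsim N$, the argument simplifies and only the empirical regime below is used. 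For the generic case $\ell\le n_0$ we decompose
\[
f^2=(\pi_\ell f)^2+\sum_{n=\ell}^{n_0-1}\big[(\pi_{n+1}f)^2-(\pi_n f)^2\big]+\big[f^2-(\pi_{n_0}f)^2\big].
\]
We bound the centered empirical averages of each piece, uniformly over $f$.

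\textbf{Base level and Bernstein regime.} The set $\mathcal F_\ell$ has at most $2^{2^\ell}\le e^{2u}$ elements. For each $g\in\mathcal F_\ell$, the variable $g^2-\E g^2$ has $\psi_1$ norm $\lesssim\Delta^2$. Bernstein's inequality for sums of sub-exponential variables, with deviation parameter $\asymp u$ plus a union bound, therefore gives the term $\Delta^2(\sqrt{u/N}+u/N)$ with probability $1-e^{-cu}$.

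For $\ell\le n<n_0$ we use $(\pi_{n+1}f)^2-(\pi_nf)^2=(\pi_{n+1}f-\pi_nf)(\pi_{n+1}f+\pi_nf)$. Its $\psi_1$ norm is at most $\lesssim\Delta\,(d_n(f)+d_{n+1}(f))$. There are at most $2^{2^{n+2}}$ such pairs. Bernstein at level $t\asymp 2^n+u$ then bounds the increment by
\[
\Delta\,(d_n+d_{n+1})\big(\sqrt{2^n/N}+2^n/N\big)
\]
on an event of probability $1-\exp(-c(2^n+u))$. Since $2^n\le N$ here, we have $2^n/N\le 2^{n/2}/\sqrt N$. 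Summing over $n$ therefore gives $\lesssim\Gamma\Delta/\sqrt N$, and the failure probabilities sum to $\lesssim e^{-cu}$.

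\textbf{Tail regime.} For the remainder, write $a=f$, $b=\pi_{n_0}f$ and use $|a^2-b^2|\le (a-b)^2+2|b||a-b|$, together with Cauchy--Schwarz on the empirical measure $P_N$. This reduces matters to two quantities:
\begin{itemize}
\item $R:=\sup_f\|f-\pi_{n_0}f\|_{L_2(P_N)}$;
\item $B:=\sup_f\|\pi_{n_0}f\|_{L_2(P_N)}$, together with the corresponding expectations, which are trivially $\lesssim\Delta$ and $\lesssim d_{n_0}(f)\le 2^{-n_0/2}\Gamma\lesssim\Gamma/\sqrt N$.
\end{itemize}
To bound $R$ we chain again, now for $n\ge n_0$. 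By the triangle inequality in $L_2(P_N)$,
\[
R\le\sup_f\sum_{n\ge n_0}\|\pi_{n+1}f-\pi_nf\|_{L_2(P_N)}.
\]
For a single function $h$ with $\|h\|_{\psi_2}=\delta$, Bernstein applied to $h^2$ gives $\|h\|_{L_2(P_N)}\lesssim\delta\big(1+\sqrt{(2^n+u)/N}\big)$ with probability $1-\exp(-c(2^n+u))$. Union-bounding over the $2^{2^{n+2}}$ pairs and summing, the bound is $\lesssim\sum_{n\ge n_0}d_n\,2^{n/2}/\sqrt N\lesssim\Gamma/\sqrt N$, since $2^{n}\ge N$ in this range; the $u$-contribution is dominated once $n$ is large. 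In the same way, $B\lesssim\Delta+\Gamma/\sqrt N+\Delta\sqrt{u/N}$, where we chain $\pi_{n_0}f$ down to $\pi_\ell f$ and use the base-level bound. Collecting the pieces,
\[
R^2+2BR+\E\big[(f-\pi_{n_0}f)^2+2|\pi_{n_0}f||f-\pi_{n_0}f|\big]\lesssim \frac{\Gamma^2}{N}+\frac{\Gamma\Delta}{\sqrt N}+\Gamma\Delta\frac{\sqrt u}{N},
\]
and the last term is bounded by $\Gamma^2/N+\Delta^2u/N$ via AM--GM.

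\textbf{Main obstacle.} The delicate step is the tail regime. Beyond $n_0$ the Bernstein bound for $f^2$ itself produces terms $d_n\Delta\,2^n/N$ that are not summable against $2^{n/2}$. This is exactly why we must switch to controlling the empirical $L_2$ norm of the increments, where the $2^n/N$ appears under a square root. Making this precise means tracking that every union bound is taken at level $2^n+u$, so that the total failure probability is $\lesssim e^{-cu}$; rescaling $u$ by a constant then gives the stated probability $1-\exp(-u)$.
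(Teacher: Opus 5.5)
The paper does not prove this lemma; it quotes it from Dirksen (Cor.~5.7), Bednorz (Thm.~1) and Koltchinskii--Lounici (Thm.~8). Your proposal gives a self-contained reconstruction of the Mendelson--Pajor--Tomczak-Jaegermann/Dirksen chaining argument behind the first of these. You start the chain at $2^\ell\approx u$, apply Bernstein to increments of squares while $2^n\lesssim N$, and chain in $L_2(P_N)$ beyond that. The advantage over the citation is that it shows which regime produces each of the three terms. Most steps check out: the base level, the Bernstein regime, the bound $R\lesssim\Gamma/\sqrt N$, the expectation terms, and the case $\ell>n_0$.

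\textbf{The gap is the bound on $B=\sup_f\|\pi_{n_0}f\|_{L_2(P_N)}$.} Suppose you chain ``in the same way'' as for $R$, i.e.\ by the triangle inequality in $L_2(P_N)$ from $\pi_\ell f$ up to $\pi_{n_0}f$. For $\ell\le n<n_0$ the factor $1+\sqrt{(2^n+u)/N}$ is of order one. So you only get
\[
B\lesssim\Delta\big(1+\sqrt{u/N}\big)+\sup_f\sum_{\ell\le n<n_0}\big(d_n(f)+d_{n+1}(f)\big),
\]
and this sum is not $O(\Delta+\Gamma/\sqrt N)$.

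For a concrete failure, take linear functionals indexed by the unit ball of $\R^d$, Gaussian $\xi$, $u=1$ and $N\ge d$. A volumetric argument shows that, for every admissible sequence, some $f$ has $d_n(f)\gtrsim 1$ for all $n$ with $2^n\le d/10$. The sum is then $\gtrsim\log d$, and $2BR$ would carry a spurious logarithm.

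\textbf{Fix.} It uses only what you have already proved. The square $(\pi_{n_0}f)^2$ telescopes exactly into your base-level and Bernstein-regime pieces, so on the same event
\[
B^2\le\sup_f\E(\pi_{n_0}f)^2+\sup_f\big|(P_N-\E)(\pi_{n_0}f)^2\big|\lesssim\Delta^2\Big(1+\sqrt{\tfrac uN}+\tfrac uN\Big)+\frac{\Gamma\Delta}{\sqrt N}.
\]
Then $\sqrt{\Gamma\Delta/\sqrt N}\le\Delta+\Gamma/\sqrt N$ yields $B\lesssim\Delta+\Gamma/\sqrt N+\Delta\sqrt{u/N}$. With this, your final collection of terms goes through unchanged.
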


Now we are ready to prove Theorem \ref{thm:main2}.

\begin{proof}[Proof of Theorem~\ref{thm:main2}]
For \(\lambda>0\), define the two function classes
\[
\mathcal F_\lambda^+
:=
\left\{
(z,w)\mapsto \langle z,v\rangle+\lambda\langle w,h\rangle:
v\in T,\ h\in S
\right\},
\]
and
\[
\mathcal F_\lambda^-
:=
\left\{
(z,w)\mapsto \langle z,v\rangle-\lambda\langle w,h\rangle:
v\in T,\ h\in S
\right\}.
\]
Since \(Z\) and \(W\) are centered, all functions in
\(\mathcal F_\lambda^\pm\) are centered. Moreover, by the triangle inequality
for the \(\psi_2\)-norm,
\[
\Delta(\mathcal F_\lambda^\pm)
\le
K_Z\rad(T)+\lambda K_W\rad(S).
\]
Indeed, for \(v\in T\) and \(h\in S\),
\[
\|\langle Z,v\rangle\pm \lambda\langle W,h\rangle\|_{\psi_2}
\le
\|\langle Z,v\rangle\|_{\psi_2}
+
\lambda\|\langle W,h\rangle\|_{\psi_2}
\le
K_Z\|v\|_2+\lambda K_W\|h\|_2.
\]

We next bound the \(\gamma_2\)-functional of \(\mathcal F_\lambda^\pm\).
For
\(f_{v,h}^{\pm},f_{v',h'}^{\pm}\in\mathcal F_\lambda^\pm\),
\[
\|f_{v,h}^{\pm}-f_{v',h'}^{\pm}\|_{\psi_2}
\le
K_Z\|v-v'\|_2+\lambda K_W\|h-h'\|_2.
\]
Hence admissible chains for \(T\) and \(S\) can be combined,  with a one-level
shift to preserve admissibility,  to form an
admissible chain for \(\mathcal F_\lambda^\pm\), yielding
\[
\Gamma(\mathcal F_\lambda^\pm)
\lesssim
K_Z \gamma_2(T,\|\cdot\|_2)+\lambda K_W\gamma_2(S,\|\cdot\|_2).
\]
By the majorizing measure theorem \cite[Theorem 2.10.1]{talagrand2022upper},
\[
\gamma_2(T,\|\cdot\|_2)\lesssim\gamma(T),
\qquad
\gamma_2(S,\|\cdot\|_2)\lesssim\gamma(S),
\]
where \(\gamma(T)\) and \(\gamma(S)\) are the Gaussian complexities of \(T\) and
\(S\), as defined in \eqref{eq:geometric_quantity}.
Consequently,
\[
\Gamma(\mathcal F_\lambda^\pm)
\lesssim
K_Z\gamma(T)+\lambda K_W\gamma(S).
\]
Set
\[
A:=K_Z\gamma(T),\qquad B:=K_W\gamma(S),
\qquad
C:=K_Z\rad(T),\qquad D:=K_W\rad(S).
\]
We may assume that \(C>0\) and \(D>0\), as the case \(\rad(T)=0\) or \(\rad(S)=0\) is
trivial.
Then
\[
\Gamma(\mathcal F_\lambda^\pm)\lesssim A+\lambda B,
\qquad
\Delta(\mathcal F_\lambda^\pm)\le C+\lambda D.
\]

For every \(v\in T\) and \(h\in S\), the scaled polarization identity gives
\[
\langle z,v\rangle\langle w,h\rangle
=
\frac{
(\langle z,v\rangle+\lambda\langle w,h\rangle)^2
-
(\langle z,v\rangle-\lambda\langle w,h\rangle)^2
}{4\lambda},\qquad \lambda>0.
\]
Therefore,
\[
\begin{aligned}
&\sup_{v\in T,h\in S}
\left|
\frac1N\sum_{i=1}^N
\langle Z_i,v\rangle\langle W_i,h\rangle
-
\E\langle Z,v\rangle\langle W,h\rangle
\right| \\
&\le
\frac{1}{4\lambda}
\sup_{f\in\mathcal F_\lambda^+}
\left|
\frac1N\sum_{i=1}^N f^2(Z_i,W_i)-\E f^2(Z,W)
\right| +
\frac{1}{4\lambda}
\sup_{f\in\mathcal F_\lambda^-}
\left|
\frac1N\sum_{i=1}^N f^2(Z_i,W_i)-\E f^2(Z,W)
\right|.
\end{aligned}
\]
Applying Lemma~\ref{lem:quad_emp} to \(\mathcal F_\lambda^+\) and
\(\mathcal F_\lambda^-\), and absorbing the union bound into the constants,
we obtain with probability at least \(1-\exp(-u)\),
\[
\begin{aligned}
&\sup_{v\in T,h\in S}
\left|
\frac1N\sum_{i=1}^N
\langle Z_i,v\rangle\langle W_i,h\rangle
-
\E\langle Z,v\rangle\langle W,h\rangle
\right| \\
&\lesssim
\frac1\lambda
\left[
\frac{(A+\lambda B)(C+\lambda D)}{\sqrt N}
+
\frac{(A+\lambda B)^2}{N}
+
(C+\lambda D)^2
\left(\sqrt{\frac uN}+\frac uN\right)
\right].
\end{aligned}
\]

For every fixed $u\ge 1$, we optimize over \(\lambda\). Write the right-hand side as
\[
\frac{P}{\lambda}+Q\lambda+R,
\]
where
\[
P:=
\frac{AC}{\sqrt N}
+
\frac{A^2}{N}
+
C^2\left(\sqrt{\frac uN}+\frac uN\right),
\]
\[
Q:=
\frac{BD}{\sqrt N}
+
\frac{B^2}{N}
+
D^2\left(\sqrt{\frac uN}+\frac uN\right),
\]
and
\[
R:=
\frac{AD+BC}{\sqrt N}
+
\frac{2AB}{N}
+
2CD\left(\sqrt{\frac uN}+\frac uN\right).
\]
Choosing
\[
\lambda=\sqrt{\frac{P}{Q}},
\]
we get
\[
\frac{P}{\lambda}+Q\lambda=2\sqrt{PQ}.
\]

It remains to bound \(\sqrt{PQ}\). Let
\[
x:=\frac{A}{C\sqrt N},
\qquad
y:=\frac{B}{D\sqrt N},
\qquad
r:=\sqrt{\frac uN}.
\]
Then
\[
P=C^2(x+x^2+r+r^2),
\qquad
Q=D^2(y+y^2+r+r^2).
\]
Since
\[
x+x^2+r+r^2\le (x+r)+(x+r)^2,
\]
and similarly for \(y\), the elementary inequality
\[
\sqrt{(a+a^2)(b+b^2)}
\lesssim a+b+ab,\qquad a,b\ge0,
\]
with \(a=x+r\) and \(b=y+r\), gives
\[
\sqrt{PQ}
\lesssim
CD\left(
x+y+r+xy+rx+ry+r^2
\right).
\]
Returning to \(A,B,C,D\), this yields
\[
\sqrt{PQ}
\lesssim
\frac{AD+BC+\sqrt u\,CD}{\sqrt N}
+
\frac{(A+\sqrt u\,C)(B+\sqrt u\,D)}{N}.
\]
The same expression also dominates \(R\), up to universal constants. Hence
\[
\begin{aligned}
\sup_{v\in T,h\in S}
\left|
\frac1N\sum_{i=1}^N
\langle Z_i,v\rangle\langle W_i,h\rangle
-
\E\langle Z,v\rangle\langle W,h\rangle
\right| \lesssim
\frac{AD+BC+\sqrt u\,CD}{\sqrt N}
+
\frac{(A+\sqrt u\,C)(B+\sqrt u\,D)}{N}.
\end{aligned}
\]
Substituting the definitions of \(A,B,C,D\), we obtain
\[
\begin{aligned}
&\sup_{v\in T,h\in S}
\left|
\frac1N\sum_{i=1}^N
\langle Z_i,v\rangle\langle W_i,h\rangle
-
\E\langle Z,v\rangle\langle W,h\rangle
\right|\\
&\lesssim
K_ZK_W
\bigg(
\frac{
\gamma(T)\rad(S)+\gamma(S)\rad(T)+\sqrt u\,\rad(T)\rad(S)
}{\sqrt N}
\\
&\hspace{3.8cm}
+
\frac{
(\gamma(T)+\sqrt u\,\rad(T))
(\gamma(S)+\sqrt u\,\rad(S))
}{N}
\bigg).
\end{aligned}
\]
Finally, using
\[
\gamma(T)=\rad(T)\sqrt{d(T)},
\qquad
\gamma(S)=\rad(S)\sqrt{d(S)},
\]
we get
\[
\begin{aligned}
&\sup_{v\in T,h\in S}
\left|
\frac1N\sum_{i=1}^N
\langle Z_i,v\rangle\langle W_i,h\rangle
-
\E\langle Z,v\rangle\langle W,h\rangle
\right|\\
&\lesssim
K_ZK_W\rad(T)\rad(S)
\left(
\sqrt{\frac{d(T)+d(S)+u}{N}}
+
\frac{\sqrt{(d(T)+u)(d(S)+u)}}{N}
\right).
\end{aligned}
\]
This proves the theorem.
\end{proof}

\section{Lower Bound}
We now prove the matching lower bound for Gaussian data in
Theorem~\ref{thm:main1}. Importantly, the result does not require \(X\) and
\(Y\) to be independent; arbitrary correlation between them is allowed.

\begin{proposition}\label{prop:gaussian_lower_bound}
Let \((X_i,Y_i)_{i=1}^N\) be i.i.d. copies of a centered Gaussian random
vector $(X,Y) \in \R^{d_X+d_Y}$.
Then
\begin{align*}
\E \bigg\|\frac{1}{N}\sum_{i=1}^{N} X_i Y^{\top}_i-\E XY^{\top}\bigg\| \gtrsim (\|\Sigma_{X}\|\|\Sigma_Y\|)^{1/2} \bigg(\sqrt{\frac{r_{X}+r_{Y}}{N}}+ \frac{\sqrt{r_{X}r_{Y}}}{N}\bigg).
\end{align*} 
\end{proposition}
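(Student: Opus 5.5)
The plan is to prove three separate lower bounds, since the right-hand side is comparable to the maximum of its terms. Write $M:=\frac1N\sum_{i=1}^N X_iY_i^\top-\Sigma_{XY}$ with $\Sigma_{XY}:=\E XY^\top$, and $\sigma:=(\|\Sigma_X\|\|\Sigma_Y\|)^{1/2}$. I will show
\[
\text{(a) } \E\|M\|\gtrsim \sigma\sqrt{r_X/N},\qquad \text{(b) } \E\|M\|\gtrsim \sigma\sqrt{r_Y/N},\qquad \text{(c) } \E\|M\|\gtrsim \sigma\sqrt{r_Xr_Y}/N - C\sigma\sqrt{1/N}.
\]
Since $\sqrt{1/N}\le\sqrt{r_X/N}$, a suitable positive combination of (a), (b), (c) gives the claim.

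\textbf{Single-direction bound.} Take top unit eigenvectors $v$ of $\Sigma_X$ and $h$ of $\Sigma_Y$. Then $\langle Mh,v\rangle$ is an average of $N$ i.i.d. centered variables of the form $\langle X,v\rangle\langle Y,h\rangle - \E[\cdot]$. By the Isserlis computation in the Remark, each has variance at least $\|\Sigma_X\|\|\Sigma_Y\|$. These are Gaussian chaoses of order at most $2$, so hypercontractivity (or Paley--Zygmund with a fourth-moment bound) gives $\E|\langle Mh,v\rangle|\gtrsim \sigma/\sqrt N$. This handles the regimes $r_X\le 2$ in (a) and $r_Y\le 2$ in (b).

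\textbf{Bound (a) when $r_X>2$.} Keep $h$ as above and set $\eta:=\langle Y,h\rangle$, so $\E\eta^2=\|\Sigma_Y\|$. Gaussian regression gives $X=a\eta+X'$ with $X'$ independent of $\eta$ and $\Sigma_{X'}=\Sigma_X-\|\Sigma_Y\|aa^\top\preceq\Sigma_X$. Since $\Sigma_{X'}$ differs from $\Sigma_X$ by a rank-one term of norm at most $\|\Sigma_X\|$, we get $\tr\Sigma_{X'}\ge \tr\Sigma_X-\|\Sigma_X\|\ge \tfrac12\tr\Sigma_X$. Then
\[
Mh = a\Big(\tfrac1N\sum_i\eta_i^2-\|\Sigma_Y\|\Big)+\tfrac1N\sum_i \eta_iX'_i=:c(\eta)+G.
\]
Conditionally on $(\eta_i)$, $G$ is centered Gaussian with covariance $\Sigma_{X'}\sum_i\eta_i^2/N^2$. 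By symmetry of $G$ and the triangle inequality, $\E\|c+G\|\ge \E\|G\|$. Gaussian norm comparison then gives $\E\|G\|\gtrsim (\tr\Sigma_{X'}\sum_i\eta_i^2)^{1/2}/N$. Finally, $\E(\sum_i\eta_i^2)^{1/2}\gtrsim\sqrt{N\|\Sigma_Y\|}$, and $\|M\|\ge\|Mh\|$, which yields (a). Bound (b) follows by the same argument applied to $M^\top$.

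\textbf{Bound (c).} This is the step I expect to be the main obstacle; the plan is to test $M$ against the data itself. Use
\[
\|M\|\ge \frac{\langle X_1, MY_1\rangle}{\|X_1\|\|Y_1\|}.
\]
Expanding the numerator,
\[
\langle X_1,MY_1\rangle=\tfrac1N\|X_1\|^2\|Y_1\|^2+\tfrac1N\sum_{i\ge2}\big(\langle X_i,X_1\rangle\langle Y_i,Y_1\rangle - X_1^\top\Sigma_{XY}Y_1\big)-\tfrac1N X_1^\top\Sigma_{XY}Y_1 .
\]
After dividing by $\|X_1\|\|Y_1\|$, the terms behave as follows.
\begin{itemize}
\item The first term is $\|X_1\|\|Y_1\|/N$. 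Gaussian norm concentration gives $\|X_1\|\ge c\sqrt{\tr\Sigma_X}$ and $\|Y_1\|\ge c\sqrt{\tr\Sigma_Y}$ each with probability at least $0.9$, so its expectation is $\gtrsim \sigma\sqrt{r_Xr_Y}/N$.
\item The last term is bounded by $\|\Sigma_{XY}\|/N\le\sigma/N$, using $\|\Sigma_{XY}\|\le\sigma$ by Cauchy--Schwarz.
\item The middle sum, conditionally on $(X_1,Y_1)$, is an average of $N-1$ i.i.d. centered terms (the centering is exactly why $\Sigma_{XY}$ was split off). Each term has second moment at most $2\|\Sigma_X\|\|\Sigma_Y\|\,\|X_1\|^2\|Y_1\|^2$ by Isserlis. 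Hence its contribution has conditional $L_1$ norm $\lesssim\sigma/\sqrt N$.
\end{itemize}
Taking expectations gives (c). The delicate point is that the lower bound must be taken in expectation of a ratio, which is why I use the high-probability events for the norms rather than moments.
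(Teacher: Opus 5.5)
Your argument is correct, and it takes a genuinely different route from the paper.

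\textbf{The paper's route.} The paper regresses $Y$ on $X$ globally, $Y=LX+Z$ with $Z$ independent of $X$. It splits $M$ into a correlated part $A_N=(\frac1N\sum_iX_iX_i^\top-\Sigma_X)L^\top$ and an independent part $B_N=\frac1N\sum_iX_iZ_i^\top$, and shows $\E\|M\|\ge\frac12(\E\|A_N\|+\E\|B_N\|)$ by Jensen plus symmetry. It then proves a separate lemma for $A_N$, imports the independent-case theorem of \cite{chen2026sharp2} for $B_N$, and recombines using $P+Q=\Sigma_Y$.

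\textbf{Your route.} You never split $M$.
\begin{enumerate}
\item For the $N^{-1/2}$ terms, you orthogonalize $X$ only against the scalar $\langle Y,h\rangle$. This absorbs the correlation and the independent residual at once.
\item You handle the rank-one loss by the dichotomy $r_X\le 2$ (single-direction hypercontractive bound) versus $r_X>2$ (so $\tr(\Sigma_{X'})\ge\frac12\tr(\Sigma_X)$). The paper instead uses $a+\sqrt{\tr(\Sigma_X)-a^2}\ge\sqrt{\tr(\Sigma_X)}$.
\item For the $N^{-1}$ term, the paper bounds $\E\|X\|\,\|LX\|\ge\E\|X\|\,\E\|LX\|$ via the Gaussian correlation inequality. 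You instead intersect two small-ball events, each of probability at least $0.9$.
\end{enumerate}

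\textbf{What each buys.} Your proof is self-contained and symmetric in $X$ and $Y$, and it avoids both Royen's inequality and the external citation. The paper's proof yields a reusable lemma for $(\frac1N\sum_iX_iX_i^\top-\Sigma_X)L^\top$, and it makes the ``correlated part plus independent residual'' structure explicit.

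\textbf{Two small points.}
\begin{enumerate}
\item The event $\|X_1\|\ge c\sqrt{\tr(\Sigma_X)}$ with probability $0.9$ is an anti-concentration fact, not a consequence of norm concentration; concentration only helps when $r_X$ is large. Because your union bound needs each probability above $1/2$, Paley--Zygmund would not suffice. A short justification uses the Laplace transform and $\prod_j(1+x_j)\ge1+\sum_jx_j$ to get $\mathbb P(\|X\|^2\le\epsilon\tr(\Sigma_X))\le e^{t\epsilon}(1+2t)^{-1/2}$; taking $t=1/(2\epsilon)$ gives the bound $\sqrt{e\epsilon}$.
\item In (c), the variance computation for the middle sum is unnecessary. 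Conditioning on $(X_1,Y_1)$ and applying Jensen over the remaining samples gives $\E\|M\|\ge\frac1N(\E\|X_1\|\|Y_1\|-\sigma)$ directly, which is the device the paper uses in its lemma.
\end{enumerate}
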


\begin{proof}[Proof of Proposition \ref{prop:gaussian_lower_bound}]
Write the covariance matrix of the jointly Gaussian vector \((X,Y)\) as
\[
\begin{pmatrix}
\Sigma_X & \Sigma_{XY}\\
\Sigma_{YX} & \Sigma_Y
\end{pmatrix}.
\]

By the Gaussian regression formula \cite[Section 2.1.7]{rue2005gaussian},  there exist a matrix
\(L=\Sigma_{YX}\Sigma_X^\dagger\) and a centered Gaussian vector \(Z\),
independent of \(X\), such that
\[
Y=LX+Z.
\]
Here \(\Sigma_X^\dagger\) denotes the Moore--Penrose pseudoinverse of \(\Sigma_X\). Moreover,
\[
\Sigma_Y=L\Sigma_XL^\top+\Sigma_Z,
\]
where \(\Sigma_Z=\mathbb E ZZ^\top\). Hence, substituting \(Y_i=LX_i+Z_i\)
for each \(i\), we obtain
\[
\frac1N\sum_{i=1}^N X_iY_i^\top-\mathbb E XY^\top
=
\underbrace{\bigg(\frac{1}{N}\sum_{i=1}^N
X_iX_i^\top -\mathbb E XX^\top 
\bigg)L^\top}_{:=A_N}+\underbrace{\frac{1}{N}\sum_{i=1}^N X_iZ_i^\top}_{:=B_N}.
\]
Here  \((X_i,Z_i)_{i=1}^N\) are i.i.d. copies of \((X,Z)\) with \(X_i\) independent
of \(Z_i\).

We first note that the two terms $A_N$ and $B_N$ cannot cancel in expectation. Indeed,
conditioning on \(X_1,\ldots,X_N\), the matrix \(A_N\) is deterministic and
\(B_N\) is a centered Gaussian random matrix. Thus, by Jensen's inequality,
\[
\mathbb E_Z\|A_N+B_N\|
\ge
\left\|A_N+\mathbb E_Z B_N\right\|
=
\|A_N\|.
\]
On the other hand, conditionally on \(X_1,\ldots,X_N\), the random matrices
\(B_N\) and \(-B_N\) have the same distribution. Hence, by the triangle inequality,
\begin{align*}
\E_Z \|A_N+B_N\|
&=
\frac{1}{2}\E_Z\|A_N+B_N\|
+
\frac{1}{2}\E_Z\|A_N-B_N\| \\
&\ge
\frac{1}{2}\E_Z
\big\|(A_N+B_N)-(A_N-B_N)\big\| \\
&=
\E_Z\|B_N\|.
\end{align*}
Taking expectation over \(X_1,\ldots,X_N\), we obtain
\[
\E\|A_N+B_N\|
\ge
\max\{\E\|A_N\|,\E\|B_N\|\}
\ge
\frac12\left(\E\|A_N\|+\E\|B_N\|\right).
\]
Therefore it remains to lower bound the two pieces separately. 
  
Set
\[
P:=L\Sigma_XL^\top,
\qquad
Q:=\Sigma_Z.
\]
Then \(P,Q\succeq0\) and \(P+Q=\Sigma_Y\).

For the correlated term $A_N$, Lemma \ref{lemma:cov_times_L_lower} yields, if $P \neq 0,$
\[
\mathbb E\|A_N\|
\gtrsim
(\|\Sigma_X\|\|P\|)^{1/2}
\bigg(
\sqrt{\frac{r_X+r_P}{N}}
+
\frac{\sqrt{r_Xr_P}}{N}
\bigg)=\|\Sigma_X\|^{1/2}
\bigg(
\sqrt{\frac{\|P\|r_X+\tr(P)}{N}}
+
\frac{\sqrt{r_X\tr(P)}}{N}
\bigg),
\]
where $ r_P:=\tr(P)/\|P\|.$ If $P=0,$ we simply bound $\mathbb E\|A_N\| \ge 0.$ 

For the independent residual term
\(B_N\), the independent Gaussian cross-covariance bound \cite[Theorem 2.1]{chen2026sharp2} gives
\[
\mathbb E\|B_N\|
\asymp
(\|\Sigma_X\|\|Q\|)^{1/2}
\bigg(
\sqrt{\frac{r_X+r_Q}{N}}
+
\frac{\sqrt{r_Xr_Q}}{N}
\bigg)=\|\Sigma_X\|^{1/2}
\bigg(
\sqrt{\frac{\|Q\|r_X+\tr(Q)}{N}}
+
\frac{\sqrt{r_X\tr(Q)}}{N}
\bigg),
\]
where $r_Q:=\tr(Q)/\|Q\|.$


It remains to combine the two scales. 
Since \(P,Q\succeq0\) and \(P+Q=\Sigma_Y\), we have 
\[
\|P\|+\|Q\|\ge \|\Sigma_Y\|,\qquad \tr(P)+\tr(Q)=\tr(\Sigma_Y)
.
\]
Consequently,
\[
\sqrt{\|P\|}+\sqrt{\|Q\|}
\ge
\sqrt{\|\Sigma_Y\|}, \qquad
\sqrt{\tr(P)}+\sqrt{\tr(Q)}
\ge
\sqrt{\tr(\Sigma_Y)}.
\]
Therefore,
\[
\E\|A_N+B_N\|\ge \frac{1}{2}(\E \|A_N\|+\E \|B_N\|)
\gtrsim
\|\Sigma_X\|^{1/2}
\bigg(
\sqrt{\frac{\|\Sigma_Y\| r_X+\tr(\Sigma_Y)}{N}}
+
\frac{\sqrt{r_X\tr(\Sigma_Y)}}{N}
\bigg).
\]
Using $r_Y=\tr(\Sigma_Y)/\|\Sigma_Y\|$,
the right-hand side is equivalent, up to universal constants, to
\[
(\|\Sigma_X\|\|\Sigma_Y\|)^{1/2}
\bigg(
\sqrt{\frac{r_X+r_Y}{N}}
+
\frac{\sqrt{r_Xr_Y}}{N}
\bigg).
\]
This proves the desired lower bound.
\end{proof}

\begin{lemma}\label{lemma:cov_times_L_lower}
Let \((X_i)_{i=1}^N\) be i.i.d. copies of a centered Gaussian random
vector \(X\in\mathbb R^{d_X}\) with covariance matrix \(\Sigma_X\), and let
\(L\) be a fixed matrix such that $        P:=L\Sigma_XL^\top $ is nonzero. Then
\[
\E \left\|
\bigg( \frac1N\sum_{i=1}^N
X_iX_i^\top -\E XX^\top\bigg) L^\top
\right\|
\gtrsim
(\|\Sigma_X\|\|P\|)^{1/2}
\bigg(
\sqrt{\frac{r_X+r_P}{N}}
+
\frac{\sqrt{r_Xr_P}}{N}
\bigg),
\]
where
\[
r_X:=\frac{\tr(\Sigma_X)}{\|\Sigma_X\|},
\qquad
r_P:=\frac{\tr(P)}{\|P\|}.
\]
\end{lemma}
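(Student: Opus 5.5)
Write $X=\Sigma_X^{1/2}G$ with $G\sim\mcN(0,I_{d_X})$ and set $M:=\Sigma_X^{1/2}L^\top$. Then $M^\top M=L\Sigma_XL^\top=P$, and
\[
A_N=\Sigma_X^{1/2}\Big(\frac1N\sum_{i=1}^N G_iG_i^\top-I\Big)M .
\]
The target lower bound has three terms:
\[
\sqrt{\|\Sigma_X\|\tr(P)/N},\qquad \sqrt{\|P\|\tr(\Sigma_X)/N},\qquad \sqrt{\tr(\Sigma_X)\tr(P)}/N .
\]
Indeed, $(\|\Sigma_X\|\|P\|)^{1/2}\sqrt{(r_X+r_P)/N}$ is comparable to the sum of the first two, and the product term equals the third. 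I will prove each of these as a separate lower bound on $\E\|A_N\|$, using that the operator norm dominates $|\langle a,A_N b\rangle|$ and $\|A_N b\|_2$ for well-chosen deterministic or random vectors.

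For the first two terms I use one-sided test vectors. Take $a$ a top unit eigenvector of $\Sigma_X$, so $\Sigma_X^{1/2}a=\|\Sigma_X\|^{1/2}a$. Then
\[
\|A_N\|\ge \|A_N^\top a\|_2=\|\Sigma_X\|^{1/2}\Big\|\frac1N\sum_i \langle G_i,a\rangle M^\top G_i - M^\top a\Big\|_2 .
\]
Decompose $G_i=\langle G_i,a\rangle a+G_i'$ with $G_i'$ independent of $\langle G_i,a\rangle$. Conditionally on the scalars $g_i=\langle G_i,a\rangle$, the piece $\frac1N\sum_i g_iM^\top G_i'$ is a centered Gaussian vector independent of the rest. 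So, as in the symmetrization argument in the proof of Proposition~\ref{prop:gaussian_lower_bound}, the expected norm is at least that of this Gaussian piece, and also at least that of the $a$-direction piece $(\frac1N\sum g_i^2-1)M^\top a$. The Gaussian piece has covariance $\frac{\sum g_i^2}{N^2}M^\top(I-aa^\top)M$, so its expected norm is $\gtrsim \sqrt{\tr(P)-\|Ma\|^2}/\sqrt N$, while the other piece gives $\|Ma\|/\sqrt N$. Together these give $\gtrsim\sqrt{\|\Sigma_X\|\tr(P)/N}$. Symmetrically, take $b$ a top unit eigenvector of $P$ and use $\|A_N\|\ge\|A_Nb\|_2$ with $Mb=\|P\|^{1/2}\tilde b$ for a unit vector $\tilde b$. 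The same split gives $\gtrsim\sqrt{\|P\|\tr(\Sigma_X)/N}$, where the Gaussian fluctuation now lives in the $\Sigma_X^{1/2}$ range.

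For the product term $\sqrt{\tr(\Sigma_X)\tr(P)}/N$, this is only relevant in the regime $N\lesssim\sqrt{r_Xr_P}$, which is where the main obstacle lies. I first try a single-sample test. By the symmetry $G_i\mapsto -G_i$ and Jensen, conditioning on all but $G_1$,
\[
\E\|A_N\|\ge \frac1N\E\|\Sigma_X^{1/2}(G_1G_1^\top-I)M\|,
\]
which is at least $\frac1N(\E\|\Sigma_X^{1/2}G_1\|_2\|M^\top G_1\|_2-\E\|\Sigma_X^{1/2}M\|)$. The Jensen step needs care: I would take $\E$ over $G_1$ with the others fixed and subtract the conditional mean, or use the $\frac12$-trick from the Proposition's proof. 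The subtracted term is $\le\|\Sigma_X^{1/2}\|\,\|M\|$, which is of lower order than the target. The main step is then showing
\[
\E\big[\|\Sigma_X^{1/2}G\|_2\|M^\top G\|_2\big]\gtrsim\sqrt{\tr(\Sigma_X)\tr(P)},
\]
despite the two norms being dependent. I would use Paley--Zygmund, or the fact that Gaussian quadratic forms concentrate: $\|\Sigma_X^{1/2}G\|_2^2\ge\tr(\Sigma_X)/2$ with probability bounded below whenever $r_X$ is large, and similarly for $P$. Both events then hold simultaneously with constant probability, since each fails with probability at most a constant $<1/2$. In the leftover small effective-rank cases, the product term is dominated by the first two terms or by $\sqrt{\cdot/N}$, so I would handle them by case analysis.
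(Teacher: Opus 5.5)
Your proposal is correct. For the two $N^{-1/2}$ terms it follows the paper's argument up to whitening: you test against top eigenvectors of $\Sigma_X$ and of $P$, split off the component along the test direction, keep both the $(\frac1N\sum_i g_i^2-1)$ piece and the conditionally Gaussian piece via Jensen and conditional symmetry, and conclude with $\alpha+\sqrt{T-\alpha^2}\ge\sqrt T$. (The quantity there is $\|M^\top a\|_2$, not $\|Ma\|_2$.)

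The genuine difference is in the product term. The paper proves $\E\|X\|_2\|LX\|_2\ge\E\|X\|_2\,\E\|LX\|_2$ from the Gaussian correlation inequality, which holds at every effective rank. It then absorbs the loss $(\|\Sigma_X\|\|P\|)^{1/2}/N$ by a convex combination with the already established bound $\gtrsim(\|\Sigma_X\|\|P\|)^{1/2}/N$. You instead use concentration of each quadratic form separately, and this works once the thresholds are made explicit:
\begin{itemize}
\item By Chebyshev, $\mathbb P(\|\Sigma_X^{1/2}G\|_2^2<\tr(\Sigma_X)/2)\le 8/r_X$, and likewise with $8/r_P$ for $\|M^\top G\|_2^2$.
\item When $r_X,r_P\ge 32$, a union bound gives $\E\|\Sigma_X^{1/2}G\|_2\|M^\top G\|_2\ge\frac14\sqrt{\tr(\Sigma_X)\tr(P)}$. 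Moreover $\sqrt{r_Xr_P}\ge 32$ makes the subtracted $(\|\Sigma_X\|\|P\|)^{1/2}$ harmless.
\item When $r_X<32$, $N\ge 1$ gives $\sqrt{\tr(\Sigma_X)\tr(P)}/N\le\sqrt{32}\,\sqrt{\|\Sigma_X\|\tr(P)/N}$, which you already have.
\item When $r_P<32$, the same reasoning gives the bound $\sqrt{32}\,\sqrt{\|P\|\tr(\Sigma_X)/N}$, which you also already have.
\end{itemize}
Write these cases out rather than leaving them as ``case analysis''. Your route is elementary, avoids Royen's theorem, and would survive in settings where only Hanson--Wright-type concentration is available. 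The paper's route is uniform in the effective ranks and needs no case split, at the price of a deep inequality.

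One justification needs correcting: the single-sample reduction. Condition on $G_1$ and average over $G_2,\dots,G_N$, whose summands $G_iG_i^\top-I$ have mean zero. Jensen then gives $\E\|A_N\|\ge\frac1N\E\|\Sigma_X^{1/2}(G_1G_1^\top-I)M\|$ at once. Conditioning on all but $G_1$ and averaging over $G_1$ goes the wrong way. Neither the sign flip $G_i\mapsto -G_i$ (which leaves $G_iG_i^\top$ unchanged) nor the $\frac12$-trick applies here. The latter needs a conditionally symmetric summand, and $G_1G_1^\top-I$ is not symmetric in distribution.
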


\begin{proof}[Proof of Lemma \ref{lemma:cov_times_L_lower}]

We first prove the contribution of order \(N^{-1}\). By conditioning on
\(X_1\) and averaging over \(X_2,\ldots,X_N\), Jensen's inequality gives
\[
\E \left\|
\bigg( \frac1N\sum_{i=1}^N X_iX_i^\top-\Sigma_X\bigg)L^\top
\right\|
\ge
\frac{1}{N}
\E\|(XX^\top-\Sigma_X)L^\top\|.
\]
By the triangle inequality,
\[
\E\|(XX^\top-\Sigma_X)L^\top\|
\ge
\E\|XX^\top L^\top\|-\|\Sigma_XL^\top\|.
\]
Since
\[
        \|XX^\top L^\top\|=\|X\|\,\|LX\|,
\]
we lower bound \(\E\|X\|\,\|LX\|\) using the Gaussian correlation
inequality \cite{royen2014simple,latala2017royen}. Indeed, for \(s,t\ge0\), the sets
\[
        \{x:\|x\|\le s\},
        \qquad
        \{x:\|Lx\|\le t\}
\]
are symmetric and convex. Hence the Gaussian correlation inequality implies
\[
\mathbb P(\|X\|\le s,\|LX\|\le t)
\ge
\mathbb P(\|X\|\le s)\mathbb P(\|LX\|\le t).
\]
Equivalently,
\[
\mathbb P(\|X\|>s,\|LX\|>t)
\ge
\mathbb P(\|X\|>s)\mathbb P(\|LX\|>t).
\]
Using the layer-cake representation,
\[
\E\|X\|\,\|LX\|
=
\int_0^\infty\int_0^\infty
\mathbb P(\|X\|>s,\|LX\|>t)\,ds\,dt,
\]
we obtain
\[
        \E\|X\|\,\|LX\|
        \ge
        \E\|X\|\,\E\|LX\|.
\]
Moreover, for centered Gaussian vectors,
\[
        \E\|X\|\asymp \sqrt{\tr(\Sigma_X)},
        \qquad
        \E\|LX\|\asymp \sqrt{\tr(L\Sigma_XL^\top)}
        =
        \sqrt{\tr(P)} .
\]
Therefore
\[
        \E\|X\|\,\|LX\|
        \ge
        c_0\sqrt{\tr(\Sigma_X)\tr(P)} .
\]
On the other hand,
\[
        \|\Sigma_XL^\top\|
        =
        \|\Sigma_X^{1/2}\Sigma_X^{1/2}L^\top\|
        \le
        \|\Sigma_X\|^{1/2}\|\Sigma_X^{1/2}L^\top\|
        =
        \|\Sigma_X\|^{1/2}\|L\Sigma_XL^\top\|^{1/2}.
\]
Thus, writing
\begin{equation}\label{def:AandR}
          A:=(\|\Sigma_X\|\|P\|)^{1/2},
        \qquad
        R:=\sqrt{\tr(\Sigma_X)\tr(P)}
        =
        A\sqrt{r_Xr_P},  
\end{equation}
we have shown
\begin{align}\label{eq:lower_bound_aux1}
\E \left\|
\bigg( \frac1N\sum_{i=1}^N X_iX_i^\top-\Sigma_X\bigg)L^\top
\right\|
\ge
\frac{c_0 R-A}{N}.
\end{align}

We next prove the contribution of order \(N^{-1/2}\). Choose
\(w\in\mathbb{S}^{d_Y-1}\) such that
\[
        w^\top P w=\|P\|.
\]
Set
\[
        \ell:=L^\top w,
        \qquad
        \eta:=\langle X,\ell\rangle,
        \qquad
        \sigma^2:=\E\eta^2=\ell^\top\Sigma_X\ell=\|P\|.
\]
Then
\[
\E \left\|
\bigg( \frac1N\sum_{i=1}^N X_iX_i^\top-\Sigma_X\bigg)L^\top
\right\|
\ge
\E\left\|
\bigg( \frac1N\sum_{i=1}^N X_iX_i^\top-\Sigma_X\bigg)\ell
\right\|=
\E\left\|
\frac1N\sum_{i=1}^N
\bigl(X_i\eta_i-\E X\eta\bigr)
\right\|.
\]
We orthogonalize \(X\) with respect to \(\eta\). Write
\[
        X=\frac{\Sigma_X\ell}{\sigma^2}\eta+X^\perp,
\]
where \(X^\perp\) is centered Gaussian and independent of \(\eta\), with covariance
\[
        \Sigma_\perp
        =
        \Sigma_X-\frac{\Sigma_X\ell\ell^\top\Sigma_X}{\sigma^2}.
\]
Indeed,
\[
        \E X^\perp \eta
        =
        \E X\eta-\frac{\Sigma_X\ell}{\sigma^2}\E\eta^2
        =
        \Sigma_X\ell-\Sigma_X\ell=0,
\]
and uncorrelated jointly Gaussian random vectors are independent. 
Thus
\[
        X\eta-\E X\eta
        =
        \frac{\Sigma_X\ell}{\sigma^2}(\eta^2-\sigma^2)
        +
        X^\perp\eta .
\]
Consequently,
\[
\frac1N\sum_{i=1}^N
\bigl(X_i\eta_i-\E X\eta\bigr)
=
U_N+V_N,
\]
where
\[
        U_N:=
        \frac{\Sigma_X\ell}{\sigma^2}
        \frac1N\sum_{i=1}^N(\eta_i^2-\sigma^2),
        \qquad
        V_N:=
        \frac1N\sum_{i=1}^N X_i^\perp\eta_i .
\]
Conditionally on \((\eta_i)_{i=1}^N\), \(U_N\) is deterministic and \(V_N\)
is a centered Gaussian vector. Hence, by Jensen's inequality,
\[
        \E_{X^\perp}\|U_N+V_N\|\ge \|U_N\|.
\]
Also, by symmetry of \(V_N\),
\[
        \E_{X^\perp}\|U_N+V_N\|
        =
        \frac12\E_{X^\perp}\|U_N+V_N\|
        +
        \frac12\E_{X^\perp}\|U_N-V_N\|
        \ge
        \E_{X^\perp}\|V_N\|.
\]
Therefore,
\[
        \E\|U_N+V_N\|
        \ge
        \frac12\E\|U_N\|+\frac12\E\|V_N\|.
\]

We now estimate the two terms. Since \(\eta_i/\sigma\) are i.i.d. standard
Gaussians,
\[
        \E\|U_N\|
        =
        \frac{\|\Sigma_X\ell\|}{\sigma^2}
        \E\bigg|
        \frac1N\sum_{i=1}^N(\eta_i^2-\sigma^2)
        \bigg|
        \asymp
        \frac{\|\Sigma_X\ell\|}{\sqrt N}.
\]
Conditionally on \((\eta_i)_{i=1}^N\),
\[
        V_N
        \sim
        \mcN\bigg(
        0,\,
        \frac1{N^2}\sum_{i=1}^N\eta_i^2\,\Sigma_\perp
        \bigg).
\]
For a centered Gaussian vector \(G\) with covariance matrix \(\Gamma\), we use
\[
        \E\|G\|\asymp \sqrt{\tr(\Gamma)}.
\]
Hence
\[
        \E\|V_N\|
        \asymp
        \frac{1}{N}
        \E\left(\sum_{i=1}^N\eta_i^2\right)^{1/2}
        \sqrt{\tr(\Sigma_\perp)}
        \asymp
        \frac{\sigma}{\sqrt N}\sqrt{\tr(\Sigma_\perp)}.
\]
It follows that
\[
\E\left\|
\frac1N\sum_{i=1}^N
\bigl(X_i\eta_i-\E X\eta\bigr)
\right\|
\gtrsim
\frac1{\sqrt N}
\left(
        \|\Sigma_X\ell\|
        +
        \sigma\sqrt{\tr(\Sigma_\perp)}
\right).
\]
Now
\[
        \tr(\Sigma_\perp)
        =
        \tr(\Sigma_X)
        -
        \frac{\|\Sigma_X\ell\|^2}{\sigma^2}.
\]
Therefore, setting \(a:=\|\Sigma_X\ell\|/\sigma\), we have
\begin{equation}\label{eq:elementary}
        \|\Sigma_X\ell\|
        +
        \sigma\sqrt{\tr(\Sigma_\perp)}
        =
        \sigma
        \left(
        a+
        \sqrt{\tr(\Sigma_X)-a^2}
        \right)
        \ge
        \sigma\sqrt{\tr(\Sigma_X)}.
\end{equation}
Since \(\sigma^2=\|P\|\), we obtain
\begin{align}\label{eq:lower_bound_aux2}
\E \left\|
\bigg( \frac1N\sum_{i=1}^N X_iX_i^\top-\Sigma_X\bigg)L^\top
\right\|
\gtrsim
\sqrt{\frac{\|P\|\tr(\Sigma_X)}{N}}
=
(\|\Sigma_X\|\|P\|)^{1/2}
\sqrt{\frac{r_X}{N}}.
\end{align}

We now prove the other \(N^{-1/2}\) contribution. Choose
\(v\in\mathbb S^{d_X-1}\) such that
\[
        v^\top\Sigma_Xv=\|\Sigma_X\|.
\]
Set
\[
        \xi:=\langle X,v\rangle,
        \qquad
        \tau^2:=\E\xi^2=\|\Sigma_X\|.
\]
Since
\[
        \left\|
        \bigg( \frac1N\sum_{i=1}^N X_iX_i^\top-\Sigma_X\bigg)L^\top
        \right\|
        =
        \left\|
        L\bigg( \frac1N\sum_{i=1}^N X_iX_i^\top-\Sigma_X\bigg)
        \right\|,
\]
we have
\[
\E \left\|
\bigg( \frac1N\sum_{i=1}^N X_iX_i^\top-\Sigma_X\bigg)L^\top
\right\|
\ge
\E\left\|
L\bigg( \frac1N\sum_{i=1}^N X_iX_i^\top-\Sigma_X\bigg)v
\right\|.
\]
The right-hand side equals
\[
\E\left\|
\frac1N\sum_{i=1}^N
\bigl(LX_i\xi_i-\E LX\,\xi\bigr)
\right\|.
\]
We now orthogonalize \(LX\) with respect to \(\xi\). Write
\[
        LX=\frac{L\Sigma_Xv}{\tau^2}\xi+Z,
\]
where \(Z\) is centered Gaussian and independent of \(\xi\), with covariance
\[
        P_\perp
        =
        P-\frac{L\Sigma_Xvv^\top\Sigma_XL^\top}{\tau^2}.
\]
Repeating the preceding argument gives
\[
\E\left\|
\frac1N\sum_{i=1}^N
\bigl(LX_i\xi_i-\E LX\,\xi\bigr)
\right\|
\gtrsim
\frac1{\sqrt N}
\left(
        \|L\Sigma_Xv\|
        +
        \tau\sqrt{\tr(P_\perp)}
\right).
\]
Since
\[
        \tr(P_\perp)
        =
        \tr(P)-\frac{\|L\Sigma_Xv\|^2}{\tau^2},
\]
the same elementary inequality as in \eqref{eq:elementary} gives
\[
        \|L\Sigma_Xv\|
        +
        \tau\sqrt{\tr(P_\perp)}
        \ge
        \tau\sqrt{\tr(P)}.
\]
Thus
\begin{align}\label{eq:lower_bound_aux3}
\E \left\|
\bigg( \frac1N\sum_{i=1}^N X_iX_i^\top-\Sigma_X\bigg)L^\top
\right\|
\gtrsim
\sqrt{\frac{\|\Sigma_X\|\tr(P)}{N}}
=
(\|\Sigma_X\|\|P\|)^{1/2}
\sqrt{\frac{r_P}{N}}.
\end{align}
Combining \eqref{eq:lower_bound_aux2} and \eqref{eq:lower_bound_aux3} yields
\begin{align}\label{eq:lower_bound_aux4}
\E \left\|
\bigg( \frac1N\sum_{i=1}^N X_iX_i^\top-\Sigma_X\bigg)L^\top
\right\|
\gtrsim
(\|\Sigma_X\|\|P\|)^{1/2}
\sqrt{\frac{r_X+r_P}{N}}.
\end{align}

Finally, we combine the \(N^{-1}\) estimate \eqref{eq:lower_bound_aux1} with the
\(N^{-1/2}\) estimate \eqref{eq:lower_bound_aux4} using the convex-combination trick. From
\eqref{eq:lower_bound_aux4} and \(r_X,r_P\ge1\), we have 
\begin{align*}
\E \left\|
\bigg( \frac1N\sum_{i=1}^N X_iX_i^\top-\Sigma_X\bigg)L^\top
\right\|
\ge
c_1 A\sqrt{\frac{r_X+r_P}{N}}
\ge
c_1\frac{A}{N}.
\end{align*}
Together with \eqref{eq:lower_bound_aux1}, for every \(t\in[0,1]\),
\[
\E \left\|
\bigg( \frac1N\sum_{i=1}^N X_iX_i^\top-\Sigma_X\bigg)L^\top
\right\|
\ge
t\frac{c_0R-A}{N}
+
(1-t)c_1\frac{A}{N}.
\]
Choose \(t>0\) sufficiently small so that
\[
        (1-t)c_1-t\ge0.
\]
Then, using \eqref{def:AandR},
\begin{equation}\label{eq:lower_bound_aux5}
    \E \left\|
\bigg( \frac1N\sum_{i=1}^N X_iX_i^\top-\Sigma_X\bigg)L^\top
\right\|
\ge
\frac{tc_0R}{N}
\gtrsim (\|\Sigma_X\|\|P\|)^{1/2}
 \frac{\sqrt{r_Xr_P}}{N}.
\end{equation}

Combining  \eqref{eq:lower_bound_aux4} and \eqref{eq:lower_bound_aux5}, and recalling that
\(P=L\Sigma_XL^\top\), we obtain
\[
\E \left\|
\bigg( \frac1N\sum_{i=1}^N X_iX_i^\top-\Sigma_X\bigg)L^\top
\right\|
\gtrsim
(\|\Sigma_X\|\|L\Sigma_XL^\top\|)^{1/2}
\bigg(
\sqrt{\frac{r_X+r_P}{N}}
+
\frac{\sqrt{r_Xr_P}}{N}
\bigg).
\]
This proves the lemma. 
\end{proof}

\section*{Acknowledgments}
The authors were partly funded by the NSF CAREER award DMS-2237628.

\bibliographystyle{siam} 
\bibliography{references}

\end{document}